\documentclass[12pt,a4paper]{article}

\usepackage{amsmath, amssymb, amsthm}
\usepackage{geometry}
\usepackage{graphicx}
\usepackage{mathrsfs}
\usepackage{enumitem}
\usepackage{cite}
\usepackage{float}     
\usepackage{url}
\usepackage[colorlinks=true, linkcolor=blue, citecolor=blue, urlcolor=blue]{hyperref}

\geometry{
	a4paper,
	left=20mm,
	right=20mm,
	top=20mm,
	bottom=20mm,
}

\newtheorem{theorem}{Theorem}[section]
\newtheorem{lemma}[theorem]{Lemma}

\theoremstyle{definition}

\theoremstyle{remark}
\newtheorem{remark}[theorem]{Remark}
\newenvironment{acknowledgment}{
	\section*{Acknowledgment}
}{}


\title{
	Minimal Perimeter Triangle in Nonconvex Quadrangle: \\
	\textit{Generalized Fagnano Problem}
}
\author{
	Triloki Nath\thanks{
		Department of Mathematics \& Statistics, Deen Dayal Upadhyaya Gorakhpur University, Gorakhpur, Uttar Pradesh (INDIA). \texttt{triloki.mathstat@ddugu.ac.in}
	}
	\and 
	Manohar Choudhary\thanks{
		Department of Mathematics \& Statistics, Dr Harisingh Gour Vishwavidyalaya, Sagar, Madhya Pradesh (INDIA). \texttt{y21472007.rs@dhsgsu.edu.in}
	}
}
\date{}

\begin{document}
	
	\maketitle
	
	\begin{abstract}
		In 1775, Fagnano introduced the following geometric optimization problem: inscribe a triangle of minimal perimeter in a given acute-angled triangle.	
		A widely accessible solution is provided by the Hungarian mathematician L. Fejer in 1900.
		This paper presents a specific generalization of the classical Fagnano problem, which states that given a nonconvex quadrangle (having one reflex angle and others are acute angles), find a triangle of minimal perimeter with exactly one vertex on each of the sides that do not form reflex angle, and the third vertex lies on either of the sides forming the reflex angle. We provide its geometric solution. Additionally, we establish an upper bound for the classical Fagnano problem, demonstrating that the minimal perimeter of the triangle inscribed in a given acute-angled triangle cannot exceed twice the length of any of its sides.
	\end{abstract}

\textbf{Keywords:} Fagnano Problem, Geometric Optimization, Distance Minimization
 
 \textbf{Mathematics Subject Classification:} 51M16, 51M04, 51M25

\section{Introduction}

For centuries, the field of geometric optimization has been fascinated by the beauty of geometry, inspired by timeless questions of geometric extrema, focusing on minimizing or maximizing fundamental measures such as area, volume, and perimeter. One of the earliest milestones in geometric optimization originated with \textit{Heron of Alexandria}, who formulated the problem of finding the shortest path from a point on a given line to two fixed points in the plane. Though deceptively simple in appearance, this problem, now celebrated as \textit{Heron’s problem}, introduced the foundational idea of path minimization and anticipated the geometric principle, the \textit{law of reflection,} that would later shape much of classical optimization.

As mathematics evolved, the spirit of geometric exploration grew richer, leading to ever more refined and elegant formulations. In the seventeenth century, the \textit{Fermat–Torricelli problem} carried Heron’s idea from paths to points, asking for a location that minimizes the total distance to three given vertices. This shift from linear to planar optimization represented a significant conceptual advance, revealing a deep interplay between geometry and minimality.

A century later, the pursuit of geometric perfection reached a new level of beauty and insight with the \textit{Fagnano problem}, which asked for the triangle of smallest perimeter that could be inscribed within an acute-angled triangle. This marked a shift in focus—from identifying a single point of minimal distance to determining an entire geometric configuration that satisfies optimality under given constraints. Viewed collectively, these problems trace the expansion of geometric optimization into the wider field of constrained optimization—an evolution from tracing the shortest path, governed by the law of reflection, to constructing the most efficient form, each guided by the same enduring pursuit of elegance through precision.

\section{Classical Fagnano's Problem:}  
The Fagnano's problem, proposed by the Italian mathematician Giulio Carlo Toschi di Fagnano is solved by his son Giovanni Francesco Fagnano \cite[p. 65]{nahin_when_2007}. It is also referred to as the Schwarz triangle problem \cite[p. 306]{giaquinta_mathematical_2003}.
The problem is to find a triangle of minimal perimeter inscribed in a given acute-angled triangle $\triangle ABC$. The solution of this problem is orthic triangle or altitude triangle $\triangle PQR$ (see Figure \ref{orthictriangle}), that is vertices $P, Q, R$ are the feet of perpendiculars drawn from vertices $A, B, C$ to its opposite sides respectively, see e.g. \cite[p. 88]{coxeter_geometry_2005}, \cite[p.3]{andreescu_geometric_2006}.
\begin{figure}[H] 
	\centering
	\includegraphics[height=5cm]{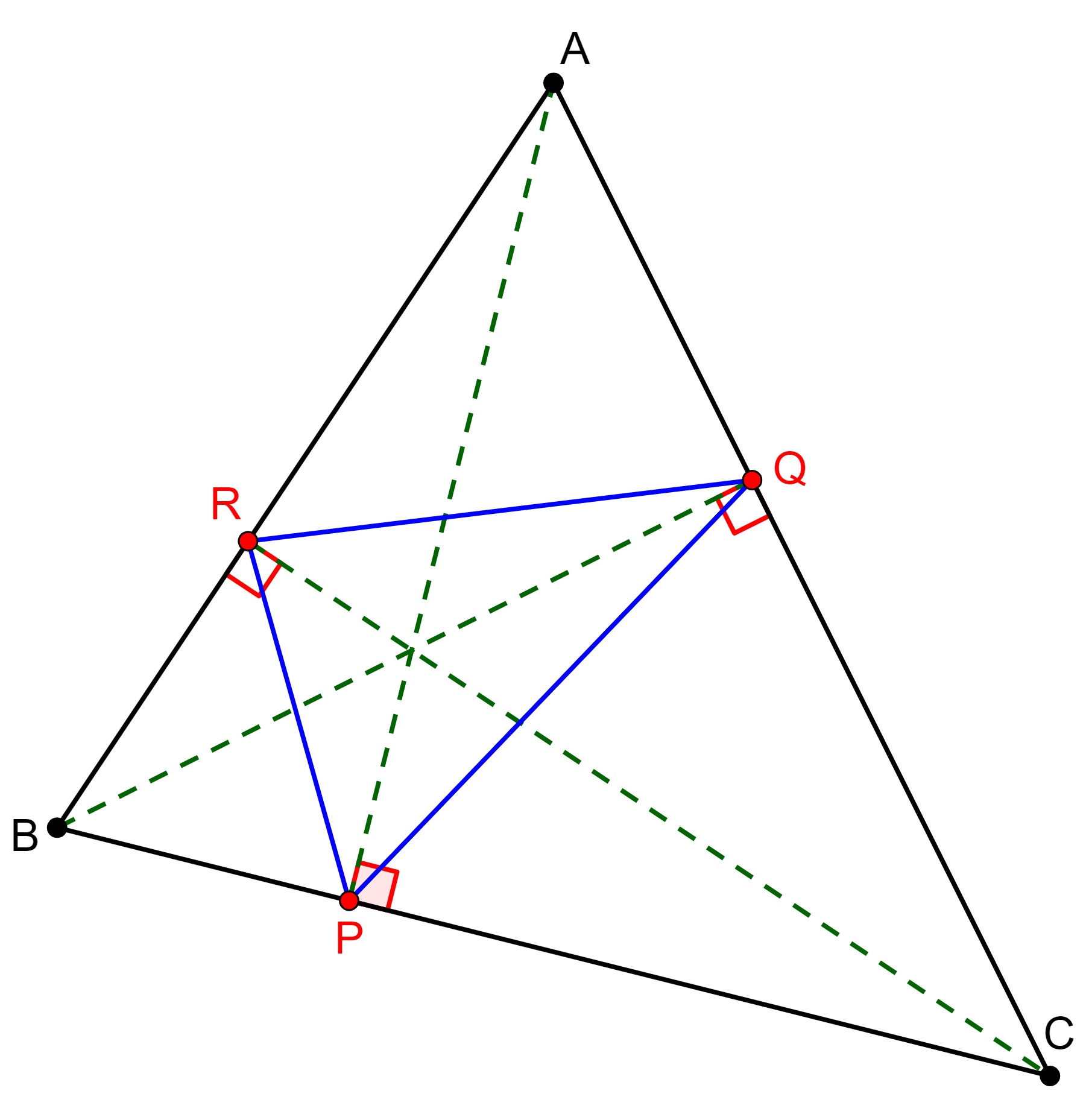} 
	\caption{Illustrating the solution to classic Fagnano's problem}
    \label{orthictriangle}
\end{figure}
\begin{theorem}[\textbf{Fagnano}]\label{classicalFagnanoTheorem}
	The triangle of minimal perimeter inscribed in a given acute-angled triangle is the orthic triangle.
\end{theorem}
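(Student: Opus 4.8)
The plan is to follow Fejér's reflection argument, which splits the optimization into an inner problem (optimal placement of two vertices with the third held fixed) followed by a one-parameter outer minimization. Label an inscribed triangle $\triangle PQR$ with $P$ on $BC$, $Q$ on $CA$, and $R$ on $AB$. First I would fix $P$ and reflect it in the line $AB$ to obtain a point $P_1$ and in the line $CA$ to obtain a point $P_2$. Since reflection is an isometry, $RP = RP_1$ and $QP = QP_2$, so the perimeter $PR + RQ + QP$ equals the length of the polyline $P_1 \!-\! R \!-\! Q \!-\! P_2$. By the triangle inequality this length is at least $|P_1P_2|$, with equality precisely when $R$ and $Q$ are the points where the segment $P_1P_2$ meets $AB$ and $CA$. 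Hence, for each fixed $P$, the smallest achievable perimeter equals $|P_1P_2|$.

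Next I would compute $|P_1P_2|$. Because $A$ lies on both mirror lines, $AP_1 = AP = AP_2$, and the reflections give $\angle P_1AB = \angle BAP$ and $\angle P_2AC = \angle CAP$, so that $\angle P_1AP_2 = 2\,\angle BAC$, a quantity independent of the choice of $P$. Thus $\triangle AP_1P_2$ is isosceles with a fixed apex angle, and $|P_1P_2| = 2\,AP\,\sin(\angle BAC)$. The outer problem therefore reduces to minimizing $AP$ over $P \in BC$, whose solution is the foot of the perpendicular from $A$ to $BC$, i.e.\ the foot of the $A$-altitude. With this $P$ fixed, the optimal $Q$ and $R$ are the intersections of $P_1P_2$ with $CA$ and $AB$; running the identical reflection argument anchored instead at a point on $CA$ (or on $AB$) forces that vertex likewise to be the corresponding altitude foot, so the unique minimizing configuration is the orthic triangle, and its perimeter is the claimed minimum.

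The step I expect to be the main obstacle — and the only place where the \emph{acute-angledness} of $\triangle ABC$ is genuinely used — is verifying that the straight segment $P_1P_2$ actually crosses the \emph{open} sides $AB$ and $CA$ rather than their extensions, and, relatedly, that the three altitude feet lie in the interiors of the sides. In an obtuse or right triangle both of these can fail, and the infimum of inscribed-triangle perimeters is then not attained by a genuine inscribed triangle. A secondary point needing care is the bookkeeping of the two-stage minimization: one must check that minimizing the inner optimum $|P_1P_2|$ over $P$ does deliver the global minimum over all inscribed triangles (not merely a partial minimum), and that the configuration produced is the same — hence unique — no matter which of the three vertices is used as the anchor in the reflection step.
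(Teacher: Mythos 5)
Your proposal is correct and is precisely the Fej\'{e}r reflection argument (fix one vertex, double-reflect it, reduce to minimizing $2\,|AP|\sin\angle BAC$) that the paper itself points to as the accessible solution of Theorem~\ref{classicalFagnanoTheorem}; the paper does not reproduce a proof but cites this exact method, so there is nothing to reconcile. Your flagged concerns (that $P_1P_2$ meets the open sides, and that the two-stage minimization together with the symmetric anchoring yields the orthic triangle as the unique global minimizer) are indeed the points requiring care, and they resolve exactly as you indicate when the triangle is acute.
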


The geometric solution to Fagnano’s problem, as provided by  Lip\'{o}t Fej\'{e}r, is elaborated in \cite[p. 3]{andreescu_geometric_2006}.
It is much easier than the approach of Hermann Amandus Schwarz \cite[p. 88]{coxeter_geometry_2005}, \cite[p. 346]{courant_what_1996}, who proved the minimal property of the orthic triangle of an acute-angled triangle using the principle of reflection in edges repetitively five times. He also proved that this holds true even when the given triangle is not acute-angled, say, $\angle BAC \geq 90^{\circ}.$ Using Fej\'{e}r’s approach, it is not difficult to see that the points $Q$ and $R$ will coincide with $A$. In other words, in this case, $\triangle PQR$ is degenerate. 

\subsection{Modern Significance and Applications}

Although these problems originated in purely geometric curiosity, their modern relevance extends far beyond classical geometry. Optimization of distances and perimeters under boundary constraints naturally arises in \textit{location science}, \textit{facility planning}, and \textit{network design}, where the goal is to minimize total connection length among designated sites. Similarly, in \textit{robotic navigation} and \textit{path planning}, geometric optimization principles govern motion efficiency and obstacle avoidance. Thus, studying such problems not only enriches geometry but also informs diverse practical applications where optimal geometric configurations are required.

\subsection{From the Classical to the Generalized Fagnano Problem}
Motivated by the structure of the Fagnano problem, recent research has sought to extend its 
principles to polygons beyond triangles. In particular, a generalization to \emph{convex 	quadrangle}, where one seeks the orthic quadrangle of minimal perimeter with vertices on 
the sides of a convex quadrangle has been proposed \cite{Mammana2010Orthic}. This extension 
preserves the geometric essence of the original problem while introducing new analytic challenges.

However, for \emph{nonconvex quadrangle}, it is geometrically impossible to construct a 
quadrangle with all four vertices on the sides such that the quadrangle is entirely 
contained within the quadrangle, since one of its interior angles exceeds $180^{\circ}$. 
Consequently, the problem must be reformulated: instead of seeking a quadrangle, we construct a triangle with vertices on the sides of the nonconvex quadrangle, where one vertex lies on each of the two sides that do not form the reflex angle, and the third vertex lies on either of the two sides forming the reflex angle.

\medskip
\textbf{Notation.} Throughout this paper, the following conventions are used:
\begin{itemize}
	\item For points $A$ and $B$, the symbol $AB$ denotes the closed line 
	segment with endpoints $A$ and $B$ (both endpoints included).
	
	\item The length of segment $AB$ is denoted by $|AB|$.

	\item  When we say a point $X$ lies \emph{between} $AB$, we mean $X$ 	belongs to the interior of segment $AB$, that is, $X$ lies on $AB$ with  	$X \neq A$ and $X \neq B$. 
	
	\item The perimeter of $\triangle ABC$ is denoted by 
	$\mathrm{per}(\triangle ABC)$.
\end{itemize}

\section{The Generalized Fagnano Problem} \label{Generalized Fagnano Problem}
Now, we propose the following generalization of the classical Fagnano problem:
\begin{quote}
	Let $ABCD$ be a quadrangle with a reflex angle at $C$  and all other angles acute. Determine a triangle of minimal perimeter with one vertex 
	on each of the sides that do not form the reflex angle, and the third vertex  lying on one of the sides forming the reflex angle.
\end{quote}

 Symbolically: given a quadrangle $ABCD$ with a reflex angle at $C$ and others are acute angles(see Figure \ref{fig:quadrangle}). Our goal is to find points $P$ on $AB$, $Q$ on $AD$, and $R$ on either $BC$ or $CD$ such that $\triangle PQR$ has minimal perimeter. Furthermore, observe that if $\angle BCD=180^\circ$ (i.e., points $B$, $C$, $D$ are collinear rather than forming a reflex angle at $C$), then the generalized Fagnano problem for quadrangle $ABCD$ reduces to the classical Fagnano problem for $\triangle ABD$. 
The solution is either the orthic triangle or degenerate, depending on whether $\triangle ABD$ is acute-angled or not.

\begin{figure}[H]
	\centering
	\includegraphics[height=5cm]{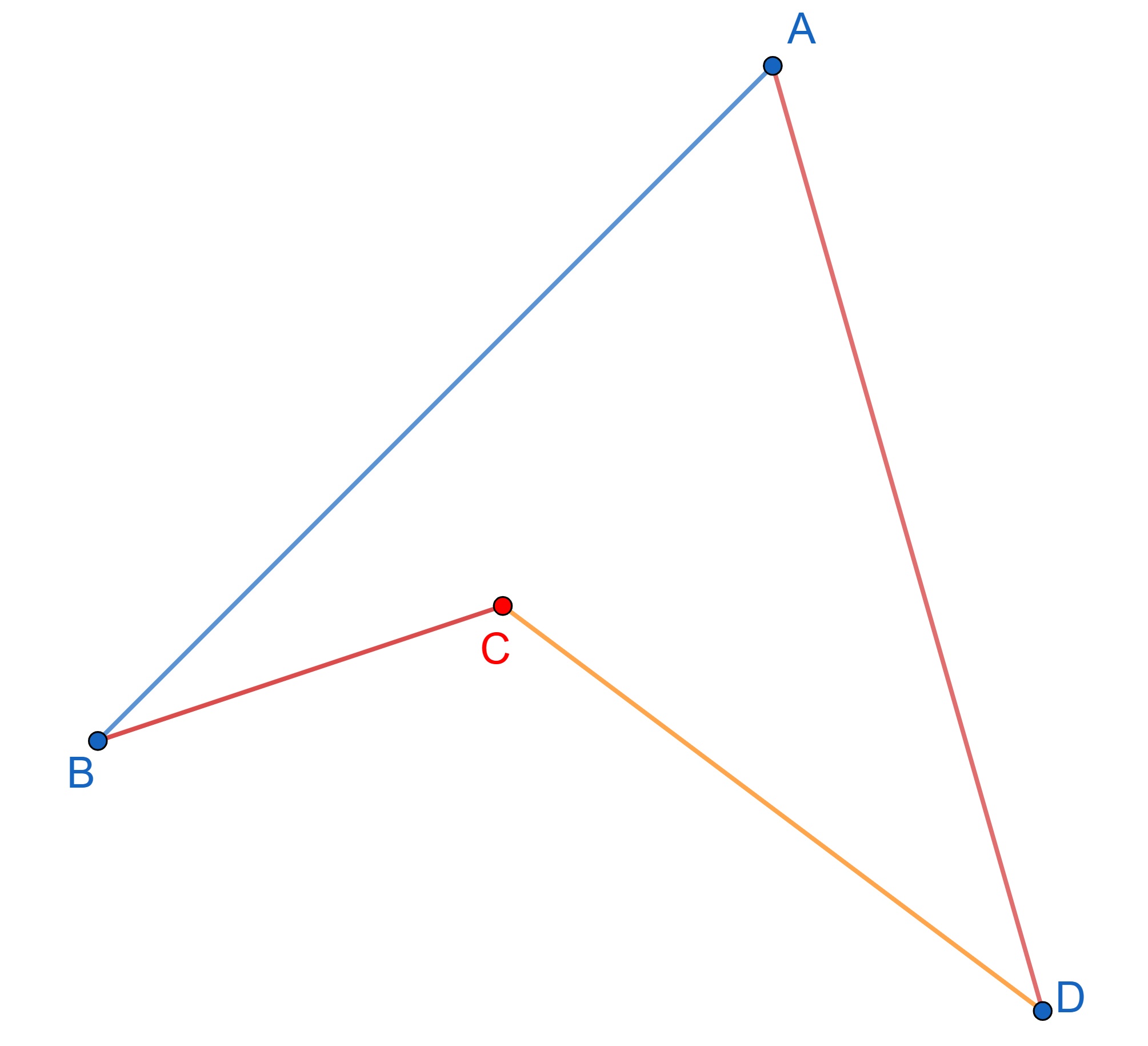}
	\caption{Acute-angled quadrangle $ABCD$ with reflex angle at $C$.}
	\label{fig:quadrangle}
\end{figure}

In what follows, we establish a complete geometric solution to this generalized problem. 
Specifically, we show that the triangle of minimal perimeter satisfying the above conditions 
corresponds to the orthic triangle of a certain auxiliary triangle constructed from the 
given quadrangle. This result not only connects the classical and generalized formulations 
but also extends the theoretical and practical reach of geometric optimization to a broader 
class of problems.

\section{Solution of the Generalized Fagnano Problem} 
The following theorem characterizes the solution of the generalized Fagnano problem for 
the quadrangle stated in Section~\ref{Generalized Fagnano Problem}.

\begin{theorem}\label{maintheorem}
	Let $ABCD$ be a quadrangle with a reflex angle at $C$ and others are acute angles. Consider the line segment $AC$ and draw the perpendicular to $AC$ through $C$. The triangle of minimal perimeter with 	one vertex on $AB$, one vertex on $AD$, and the third 
	vertex on either $BC$ or $CD$ is determined by the 	following exhaustive cases:
	
	(A) If the perpendicular intersects sides $AB$ and $AD$ at points $M$ and $N$ respectively, then the 	orthic triangle of $\triangle AMN$ is the triangle 	of minimal perimeter.
	
	(B) Otherwise, without loss of generality assume that the perpendicular does not meet $AB$. Extend side $BC$ 	to meet $AD$ at point $W$. Then the orthic triangle	of $\triangle ABW$ is the triangle of minimal	perimeter.
\end{theorem}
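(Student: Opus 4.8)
\emph{Proof strategy.} The plan is to fuse Fej\'{e}r's double-reflection argument with the classical solution, Theorem~\ref{classicalFagnanoTheorem}, applied to the auxiliary triangle. Write $\alpha:=\angle BAD$ (so $0<\alpha<90^{\circ}$) and call a triangle \emph{admissible} if it has one vertex on $AB$, one on $AD$ and a third on $BC\cup CD$. First I would establish a uniform lower bound: if $\triangle PQR$ is admissible (with $P\in AB$, $Q\in AD$, $R\in BC\cup CD$), then, since the convex angle at $A$ forces $R$ into the wedge spanned by the rays $AB$ and $AD$, reflecting $R$ in line $AB$ to $R_{1}$ and in line $AD$ to $R_{2}$ gives $|RP|=|R_{1}P|$ and $|RQ|=|R_{2}Q|$, hence
\[
\mathrm{per}(\triangle PQR)=|R_{1}P|+|PQ|+|QR_{2}|\ \ge\ |R_{1}R_{2}|.
\]
Since reflections in lines through $A$ preserve distance to $A$ we have $|AR_{1}|=|AR_{2}|=|AR|$, and a short angle-chase gives $\angle R_{1}AR_{2}=2\alpha$, so $|R_{1}R_{2}|=2|AR|\sin\alpha$. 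Thus every admissible triangle satisfies $\mathrm{per}(\triangle PQR)\ge 2\sin\alpha\cdot|AR|\ge 2\sin\alpha\cdot\delta$, where $\delta:=\operatorname{dist}(A,\,BC\cup CD)$.

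Next I would pin down $\delta$ and a nearest point $R^{\star}$ of $BC\cup CD$, matching them to the statement's cases. Because $\angle ABC$ and $\angle ADC$ are acute, the distance to $A$ is increasing near $B$ along $CB$ and near $D$ along $CD$, so the nearest point of each of $BC$, $CD$ is either $C$ or an interior point; and since the internal diagonal $AC$ splits the reflex angle at $C$, we have $\angle ACB+\angle ACD>180^{\circ}$, so at most one of $\angle ACB,\angle ACD$ is less than $90^{\circ}$. As the perpendicular to $AC$ at $C$ separates $A$ from $B$ exactly when $\angle ACB\ge 90^{\circ}$ (and likewise for $D$), the exhaustive trichotomy ``both $\ge 90^{\circ}$'' / ``$\angle ACB<90^{\circ}$'' / ``$\angle ACD<90^{\circ}$'' is precisely ``the perpendicular meets both $AB$ and $AD$'' (Case (A)) and ``it misses $AB$'' / ``it misses $AD$'' (Case (B), up to interchanging $B$ and $D$). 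In Case (A), $\delta=|AC|$, attained at $R^{\star}=C$. In Case (B), say $\angle ACB<90^{\circ}$ so $\angle ACD>90^{\circ}$; then $\angle ABC$ and $\angle ACB$ are both acute, so the foot $R^{\star}$ of the perpendicular from $A$ to line $BC$ is an interior point of $BC$, while $\angle ACD>90^{\circ}$ makes $C$ the nearest point of $CD$; hence $\delta=\operatorname{dist}(A,\text{line }BC)<|AC|$, attained at $R^{\star}$.

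Then I would verify that the orthic triangle of the auxiliary triangle realizes $2\delta\sin\alpha$ and is admissible, hence optimal by the lower bound above. In Case (A), the perpendicular meets $AB$ and $AD$ at $M,N$, with $C$ strictly between $M$ and $N$ and $\angle ACM=\angle ACN=90^{\circ}$, so $\triangle AMN$ is acute, and since $MN\perp AC$ its altitude from $A$ is $AC$, with foot $C$. By Theorem~\ref{classicalFagnanoTheorem} together with Fej\'{e}r's computation---the minimal inscribed perimeter of an acute triangle is $2h\sin\theta$, with $h$ an altitude and $\theta$ the angle at the opposite vertex---the orthic triangle of $\triangle AMN$ has perimeter $2|AC|\sin(\angle MAN)=2|AC|\sin\alpha=2\delta\sin\alpha$; its vertices are $C$ (lying on $BC$ and on $CD$), a point of the sub-segment $AM\subseteq AB$, and a point of $AN\subseteq AD$, so it is admissible. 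In Case (B), the extension of $BC$ beyond $C$ leaves $C$ in the direction antipodal to $CB$, which lies in the reflex wedge at $C$, so it re-enters the interior of $ABCD$ and can exit only across side $DA$, meeting $AD$ at a point $W\in AD$. In $\triangle ABW$ the angle at $A$ is $\alpha$, the angle at $B$ is $\angle ABC$, and the angle at $W$ is acute because $C$ lies on $BW$ with $\angle ACW=180^{\circ}-\angle ACB>90^{\circ}$; hence $\triangle ABW$ is acute. Its orthic triangle has the foot of its altitude from $A$, namely $R^{\star}\in BC$, as one vertex, a point of side $AB$ as another, and a point of $AW\subseteq AD$ as the third; so it is admissible, with perimeter $2\operatorname{dist}(A,\text{line }BC)\sin\alpha=2\delta\sin\alpha$.

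The hard part will not be the double-reflection step (which is routine) but the feasibility bookkeeping of the last step: one must confirm that the orthic triangle of the auxiliary triangle actually lands on the prescribed portions of $\partial(ABCD)$---in Case (B) that the altitude foot from $A$ falls on side $BC$ rather than on its extension $CW$, and that $W$ falls on segment $AD$ rather than beyond $D$. Both follow from the acuteness of the four non-reflex angles (together with the antipodal-direction remark at $C$), but they must be checked case by case; one should also dispose of the borderline instances, such as $\angle ACB=90^{\circ}$ and the degenerate configuration with $B,C,D$ collinear, which recovers classical Fagnano for $\triangle ABD$.
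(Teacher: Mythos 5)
Your proposal is correct, but it takes a genuinely different route from the paper. The paper never computes the minimal perimeter: it fixes the orthic triangle of the auxiliary triangle as the candidate and then runs an exhaustive comparison over eight configurations of $(X,Y,Z)$ (Cases 1--4 for part (A), Cases 5.1--5.4 for part (B)), each handled by a chain of elementary inequalities supported by two ad hoc facts and by Lemma~\ref{lemma01} (the $2|MN|$ bound); the reflection principle appears only once, in Case~1, for the fixed vertex $Z=C$. You instead promote that reflection step to a uniform lower bound: reflecting the third vertex $R$ in the lines $AB$ and $AD$ gives $\mathrm{per}(\triangle PQR)\ge |R_1R_2|=2|AR|\sin\alpha$, which collapses the whole problem to minimizing $|AR|$ over $R\in BC\cup CD$, a one-variable convexity question. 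Your analysis of that minimum (nearest point $C$ when $\angle ACB,\angle ACD\ge 90^{\circ}$, otherwise the interior foot of the perpendicular from $A$ to $BC$) recovers exactly the theorem's dichotomy, since $\angle ACB\ge 90^{\circ}$ is equivalent to the paper's existence condition~\eqref{existence} for $M$, and Fej\'{e}r's identity $\mathrm{per}(\text{orthic})=2h\sin\theta$ shows the bound is attained by the orthic triangle of $\triangle AMN$ or $\triangle ABW$. What your approach buys is an explicit value of the minimum, $2\delta\sin(\angle A)$ with $\delta=\operatorname{dist}(A,BC\cup CD)$, an intrinsic explanation of why the case split occurs, and the elimination of Facts~1--2 and Lemma~\ref{lemma01}; what the paper's approach buys is complete elementarity and strict inequalities in every non-optimal configuration, giving uniqueness without an equality analysis. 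Two points in your sketch deserve an explicit line but are not gaps: the angle chase $\angle R_1AR_2=2\alpha$ needs $R$ to lie in the closed wedge $\angle BAD$, which holds because the reflex vertex $C$ lies inside $\triangle ABD$ so the entire quadrangle (hence $BC\cup CD$) is contained in that wedge; and the feasibility checks you flag (the altitude foot from $A$ landing on $BC$ rather than $CW$, and $W$ landing strictly between $A$ and $D$) do go through exactly as you indicate, from the acuteness of $\angle ABC$, $\angle ACB<90^{\circ}$, and the fact that the extension of $BC$ beyond $C$ lies on line $BC$ and therefore cannot cross $AB$, $BC$, or $CD$ again.
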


Before proceeding to the proof of Theorem~\ref{maintheorem}, it is important to note that 
the points $M$ and $N$ defined above may or may not lie on the sides $AB$ and $AD$ 
respectively. Their existence depends on the angular constraints at vertices $B$ and $D$. 
Specifically, the points $M$ and $N$ exist if and only if the following inequalities hold:
\begin{equation}\label{existence}
	\angle B \leq 90^{\circ} - \angle BAC, 
	\qquad 
	\angle D \leq 90^{\circ} - \angle DAC.
\end{equation}
When either of these conditions fails, Case~\textbf{(B)} naturally arises.

For better understanding and visualization of Case~\textbf{(B)}, the behavior and existence 
of the points $M$ and $N$ can be explored interactively using GeoGebra.\footnote{Available 
	online at: \url{https://www.trilokinath.in/p/fnc.html}}

\medskip

The proof of Theorem~\ref{maintheorem} requires a few fundamental geometric facts about triangles. To keep our exposition self-contained and accessible, we provide proof below  along with a key observation about the minimal triangle.

\textbf{Fact 1:} Given an acute-angled $\triangle AMN$, extend $AM$ to $AX$ and $AN$ to $AY$. Then, $|XY| \geq |MN|$.

\begin{proof}
	\begin{figure}[H]
		\centering
		\includegraphics[height=5cm]{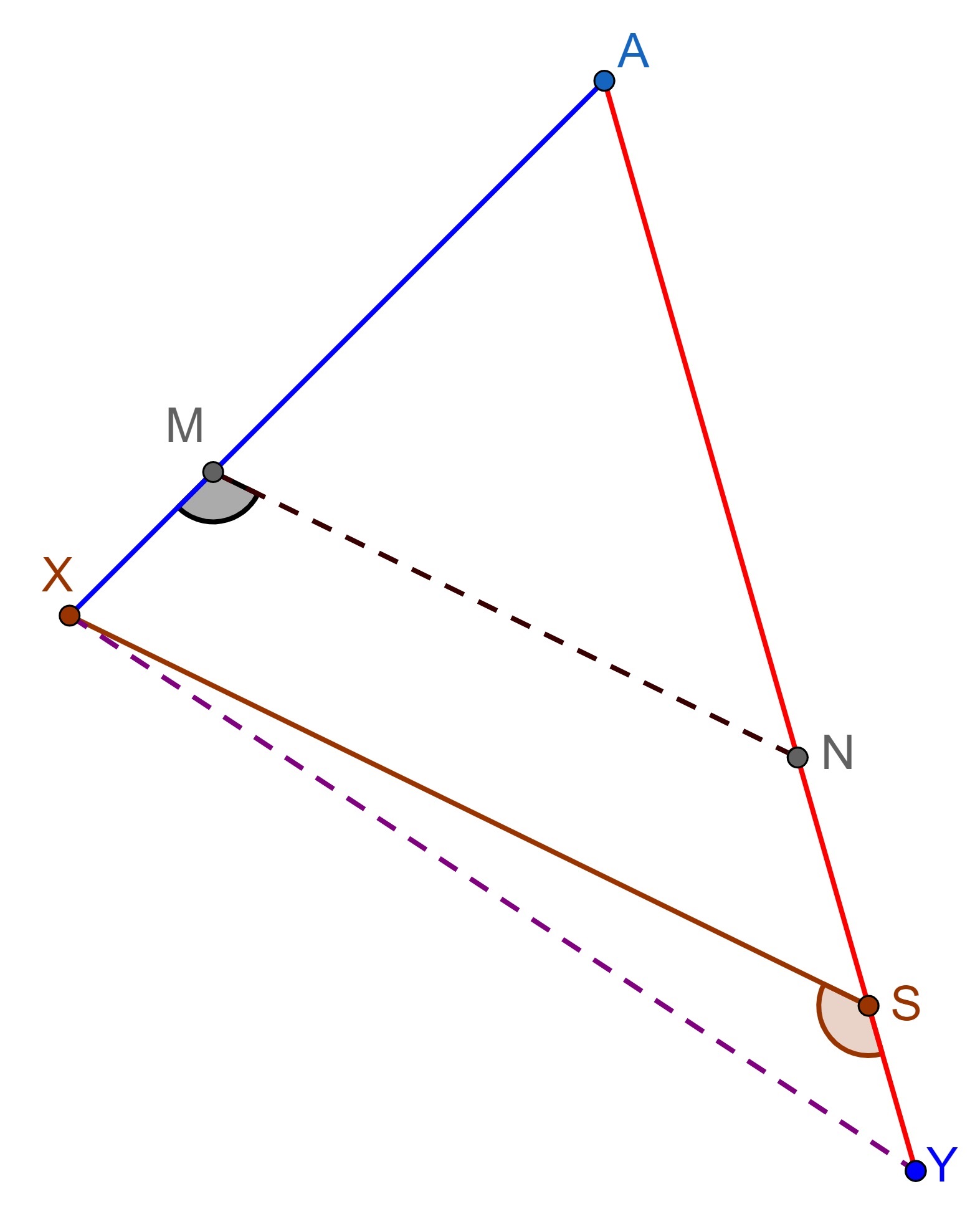}
		\caption{Illustration of Fact 1}
        \label{Fact1}
	\end{figure}
	Without loss of generality, assume that the line parallel to $MN$ through $X$ intersects the segment $NY$ at $S$(see Figure \ref{Fact1}). Then by similarity of triangle, $|XS| \geq |MN|.$ In $\triangle XSY,$ note that $\angle XSY >90$\textdegree, whereby $|XY| \geq |XS|.$ Consequently,
	\begin{equation}
		|XY| \geq |MN|,  \label{inequalities01}
	\end{equation}
	holds with strict inequality if $ X \neq M$ or $ Y \neq N$.
\end{proof}
\textbf{Fact 2:} Given any $\triangle ABC$, consider an arbitrary point $ E $ inside $ \triangle ABC $. Then,
\begin{equation}\label{perimeter_remark}
	\text{per} (\triangle ABC) > \text{per} (\triangle EBC).
\end{equation}
\begin{proof}
	This observation can be viewed in the following way:
	Since $E$ is an arbitrary point inside $\triangle ABC$, draw a perpendicular from $A$ to $BC$ (largest side), intersecting $BC$ at $M$. Join $BE$ and $CE,$ then at least one intersects the perpendicular AM. Suppose $BE$ intersects AM. Now, extend CE to intersect AM at the point $D$ (see Figure \ref{fig:fact2}). Join $BD, $ since $\angle ABM + \angle BAM=90$\textdegree, $\angle ABD < \angle ABM, \angle ADB > 90$\textdegree, similarly, $\angle ADC > 90$\textdegree.
	
	\begin{figure}[H]
		\centering
		\includegraphics[height=5cm]{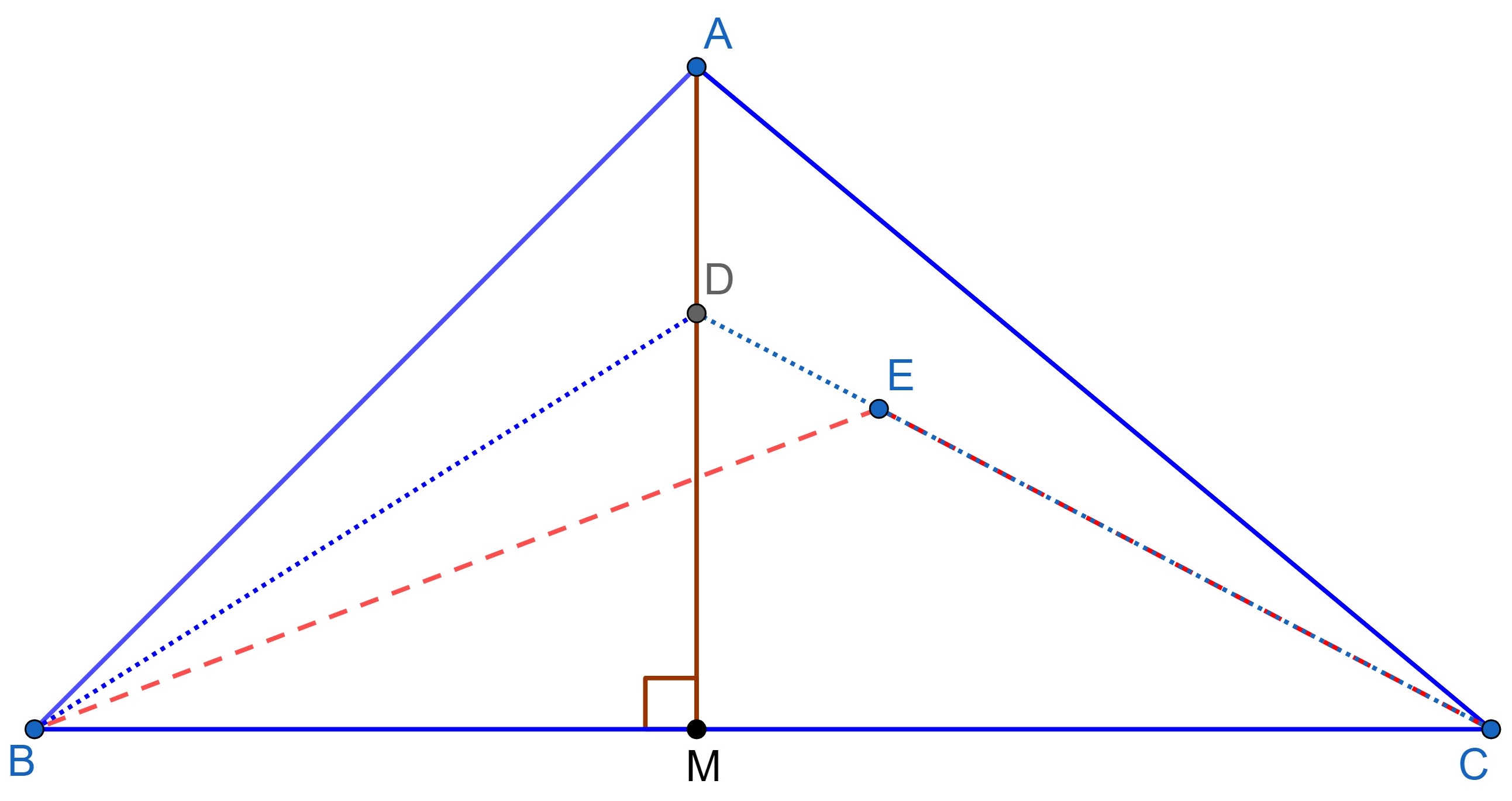}
		\caption{Illustration of Fact 2}
		\label{fig:fact2}
	\end{figure}
	Therefore, by the triangle inequality,
\begin{align*}
|AB| + |AC| &> |BD| + |DC| \\
            &= |BD| + (|DE| + |EC|) \\
            &= (|BD| + |DE|) + |EC| \\
            &\geq |BE| + |EC| \quad \text{(since $E$ and $D$ may coincide)}
\end{align*}
    
	Hence (\ref{perimeter_remark}) follows.
\end{proof}

\begin{lemma} \label{lemma01}
	The minimal perimeter of the triangle inscribed in a given acute-angled triangle cannot exceed twice the length of the smallest side.
\end{lemma}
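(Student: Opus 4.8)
The plan is to compare the orthic triangle, whose perimeter equals the minimal inscribed perimeter by Theorem~\ref{classicalFagnanoTheorem}, against a single conveniently chosen inscribed triangle built from the smallest side, and then invoke Fact~1 to control that comparison triangle. Concretely, let $\triangle ABC$ be acute-angled with smallest side, say, $BC$, and let $P$ be the foot of the altitude from $A$ onto $BC$. Consider the degenerate-looking but legitimate inscribed triangle with vertices $B$, $C$, and $P$; since $P$ lies on $BC$, this ``triangle'' is just the segment $BC$ traversed there and back, so its perimeter is $2|BC|$. Because the orthic triangle has the minimal inscribed perimeter, it suffices to exhibit \emph{some} inscribed triangle of perimeter at most $2|BC|$; but a truly degenerate triangle is not in the competition class, so instead I would take a nondegenerate inscribed triangle close to it and pass to a limit, or better, argue directly with the altitude-foot construction below.

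The cleaner route is to use the reflection/unfolding picture underlying Fej\'er's proof together with Fact~1. Let $P$ be the foot of the altitude from $A$. Reflect $P$ across sides $AB$ and $AC$ to obtain points $P'$ and $P''$. A standard computation gives $|AP'| = |AP''| = |AP|$ and $\angle P'AP'' = 2\angle BAC$. For any point $P$ on $BC$, the minimal perimeter of an inscribed triangle having $P$ as one vertex equals $|P'P''|$, and $P'P''$ is a chord of the circle of radius $|AP|$ centered at $A$ subtending angle $2\angle BAC$, hence $|P'P''| = 2|AP|\sin(\angle BAC)$. Since $|AP| \le |AB|$ and $|AP| \le |AC|$ (the altitude from $A$ is no longer than either adjacent side in an acute triangle) and $\sin(\angle BAC) \le 1$, we get $|P'P''| \le 2|AP|$. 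It remains to bound $|AP|$ by $|BC|$: in an acute triangle the altitude from $A$ satisfies $|AP| = |AB|\sin B = |AC|\sin C$, and since $BC$ is the smallest side, $\angle A$ is the smallest angle, so $\angle B, \angle C \ge \angle A$; combined with $|AP| \le \min(|AB|,|AC|)$ one checks $|AP| \le |BC|$ directly from the law of sines, as $|AP| = 2(\text{area})/|BC|$ and $2\,\text{area} \le |BC|^2$ is equivalent to the altitude onto the shortest side being at most that side, which holds because that altitude is the shortest of the three altitudes while $|BC|$ is the shortest side (reciprocal ordering of sides and altitudes). Chaining these, the minimal inscribed perimeter is at most $|P'P''| \le 2|AP| \le 2|BC|$.

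Alternatively, and perhaps in the spirit of the ``Fact 1'' machinery the authors have set up, I would avoid the circle computation entirely: drop the altitude from $A$ to get $P$ on $BC$, then from $P$ drop perpendiculars to $AB$ and $AC$ meeting them at $Q$ and $R$ respectively (these feet lie on the \emph{sides} because $\angle B$ and $\angle C$ are acute). The triangle $\triangle PQR$ is inscribed. Since $AQPR$ is cyclic with diameter $AP$, we have $|QR| = |AP|\sin(\angle A) \le |AP|$, and $|PQ| + |PR| \le$ (the two legs) which one bounds again by $|AP|$ via right-triangle trigonometry, giving $\mathrm{per}(\triangle PQR) \le 2|AP| \le 2|BC|$ by the altitude bound above; then Theorem~\ref{classicalFagnanoTheorem} finishes it.

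\textbf{Main obstacle.} The delicate point is the inequality $|AP| \le |BC|$ where $|BC|$ is the smallest side: it is \emph{not} true that the altitude from a vertex is at most any side in general, so one must use that $BC$ is the \emph{shortest} side (equivalently $\angle A$ the smallest angle) and match it with the altitude from $A$ (the \emph{shortest} altitude) via the area identity $|AP|\cdot|BC| = |BB'|\cdot|CA| = |CC'|\cdot|AB| = 2\,\text{area}$, from which $|AP| = 2\,\text{area}/|BC| \le 2\,\text{area}/|AB| = |CC'| \le |CA| \le \dots$; making this ordering argument airtight, and confirming the altitude feet $Q,R$ land in the open sides so that $\triangle PQR$ is a genuine competitor, are the steps needing care.
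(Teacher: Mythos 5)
There is a genuine gap, and it sits exactly at the point you flagged as delicate: the inequality $|AP| \le |BC|$, where $BC$ is the \emph{smallest} side and $AP$ is the altitude onto it, is false in general. The reciprocal ordering of sides and altitudes ($h_a = 2\,\mathrm{area}/a$) says that the altitude onto the \emph{shortest} side is the \emph{longest} of the three altitudes, not the shortest as you claim; your chain $2\,\mathrm{area}/|BC| \le 2\,\mathrm{area}/|AB|$ also runs the wrong way, since $|BC| \le |AB|$. Concretely, take the acute isosceles triangle $B=(0,0)$, $C=(1,0)$, $A=(0.5,10)$: here $BC=1$ is the smallest side but the altitude from $A$ has length $10$. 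So the step ``$|P'P''| \le 2|AP| \le 2|BC|$'' collapses; you discarded the factor $\sin(\angle BAC)$ too early. The repair is to keep it: $|P'P''| = 2|AP|\sin(\angle BAC) = 2|AB|\sin(\angle ABC)\sin(\angle BAC) = 2|BC|\sin(\angle ABC)\sin(\angle ACB) \le 2|BC|$ by the law of sines (in the numerical example above, $2|AP|\sin(\angle BAC)\approx 1.99 < 2$, so the lemma survives while your intermediate bound does not). The same issue infects your second alternative: $|PQ|+|PR|$ should be bounded by $|BP|\sin(\angle B) + |CP|\sin(\angle C) \le |BP|+|CP| = |BC|$, not routed through $|AP|$. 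Note also that this trigonometric identity actually proves the bound for \emph{every} side, which is stronger than the statement.

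It is worth pointing out that the idea you sketch and then abandon in your first paragraph --- the degenerate inscribed ``triangle'' $B$, $C$, $P$ with perimeter exactly $2|BC|$, approximated by genuine inscribed triangles --- is essentially the paper's own proof: the authors fix $Z$ on the chosen side, push $X$ and $Y$ toward the endpoints $M$ and $N$, and show $|XY| < |MN|$ and $|XZ|+|YZ| < |MZ|+|NZ| = |MN|$, so some competitor has perimeter strictly below $2|MN|$ and minimality of the orthic triangle finishes the argument. Either that limiting argument or the corrected trigonometric identity closes the gap; as written, the proposal does not.
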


\begin{proof}
	We begin by assuming that $\triangle PQC$ has the minimal perimeter among all the triangles inscribed in the acute-angled $\triangle AMN$. It is known that $\triangle PQC$ is an orthic triangle, which is a solution to the classical Fagnano problem. 
	\begin{figure}[H]
		\centering
		\includegraphics[height=6cm]{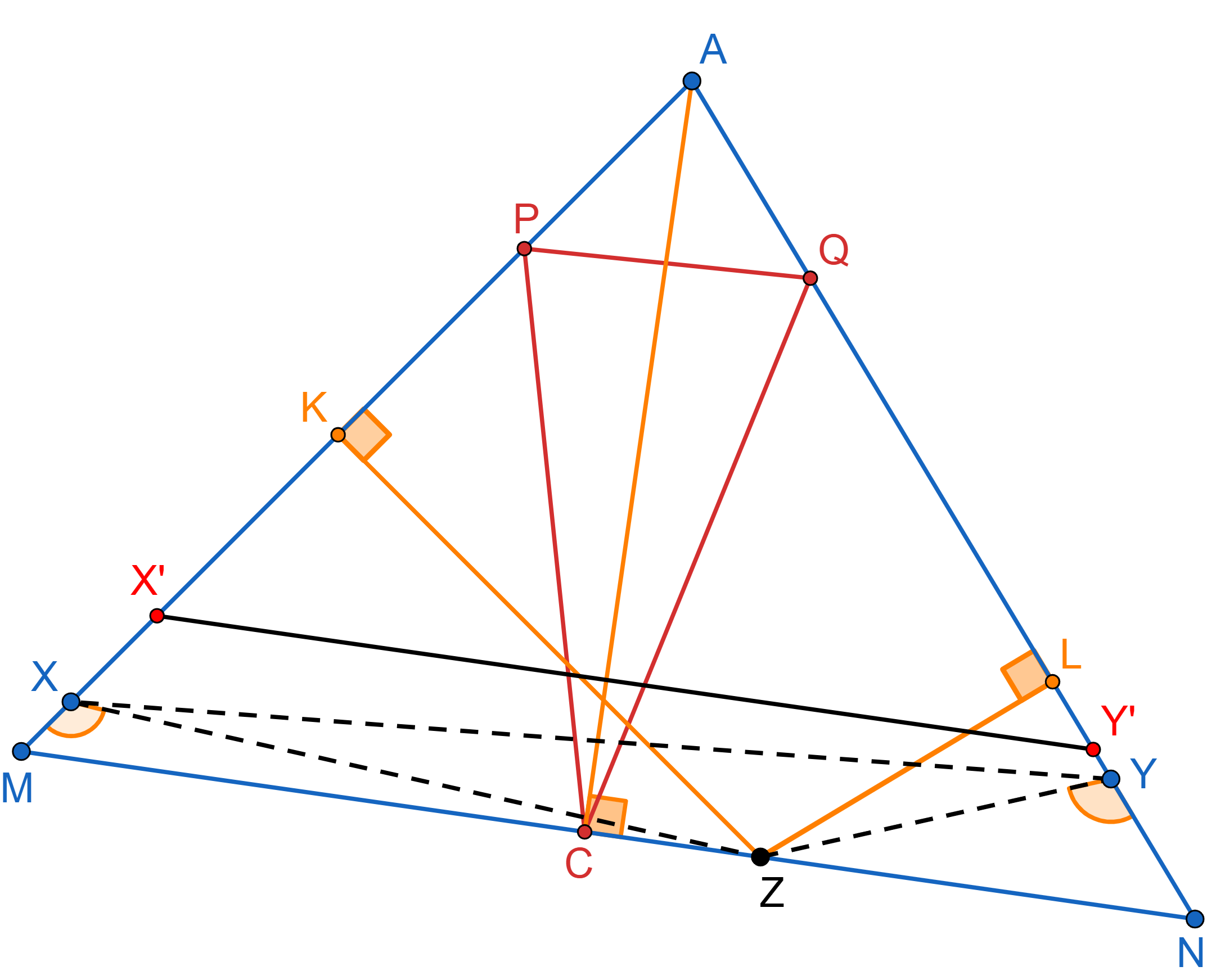}
		\caption{Illustration of Lemma \ref{lemma01}}
	\end{figure}
	
	Consider points   $ X   $,   $ Y   $, and   $ Z   $ such that   $ X   $ lies on the side   $ AM   $,   $ Y   $ lies on the side   $ AN   $, and   $ Z   $ lies on the side  $MN$. By the classical solution to the Fagnano problem, the perimeter of triangle   $ \triangle PQC   $ is less than or equal to the perimeter of any triangle   $ \triangle XYZ$ for all such $X$, $Y$, and $Z$. This can be written as:
	$$
	\text{per} (\triangle PQC) \leq \text{per} (\triangle XYZ) = |XY| + |XZ| + |YZ|
	$$ 
	
	with equality if and only if $\triangle XYZ$ coincides with 
	$\triangle PQC$.\\

	We now establish the upper bound. Fix the point $Z$ on segment $MN$.  Draw  perpendiculars $ZK$ and $ZL$ on $AM$ and $AN$ respectively. Draw a line $X'Y' || MN,$ where $X'$ lying on $KM$ and $Y'$ on $LN$.
	Since $\triangle AMN$ is acute-angled, so is $\triangle AX'Y'$. 
	Therefore, using \textbf{Fact~1}, for any $X$ between 
	$X'M$, and $Y$ between $Y'N$, we have $|XY| > |X'Y'|$. 
	That is, we can take $X$ and $Y$ so that $|XY|$ permanently increases as $X$ approaches to $M$ and $Y$ approaches to $N$. Also, $|XY| < |MN|$ if 
	$X \neq M$ or $Y \neq N$. Since $\angle ZXM$ and $\angle ZYN$ are obtuse for such $X$ and $ Y$, therefore 
	$|XZ| < |MZ|$ and $|YZ| < |NZ|$  	which  implies that   $|XZ|+ |YZ| < |MN|$. Clearly,  $|XZ|+|YZ|$ approaches to $|MN|$ 	as $X$ approaches to $M$ and $Y$ approaches to $N$.
	
	Thus, we always have $per(\triangle XYZ) < 2|MN|$ for such $X, Y$ and $Z$. Consequently  
	
	\begin{equation} \label{perimeter<2MN}
		\text{per} (\triangle PQC) < 2|MN|.
	\end{equation}
	Hence,
	$\mathrm{per}(\triangle PQC) < 2  \min\{|AM|, |MN|, |AN|\}.$
\end{proof}
Now, we have enough tools to prove our main result. The following is the proof of Theorem \ref{maintheorem}.
\begin{proof}
	We begin the proof of part (A) by noting that, $\angle ACN = 90^\circ$. Draw a perpendicular to $AN$ from $M$ intersecting $AN$ at $Q$. Similarly, draw a perpendicular to $AM$ from $N$ intersecting $AM$ at $P.$ We have to prove that the $\triangle PQC, $ inscribed in the $\triangle AMN$, so obtained has the minimal perimeter to our problem. \\
	Let the minimal perimeter be denoted by $\textbf{$p_F$}$  (i.e., $p_F = |PC| + |PQ| + |QC|$).
	\begin{figure}[H]
		\centering
		\includegraphics[height=5cm]{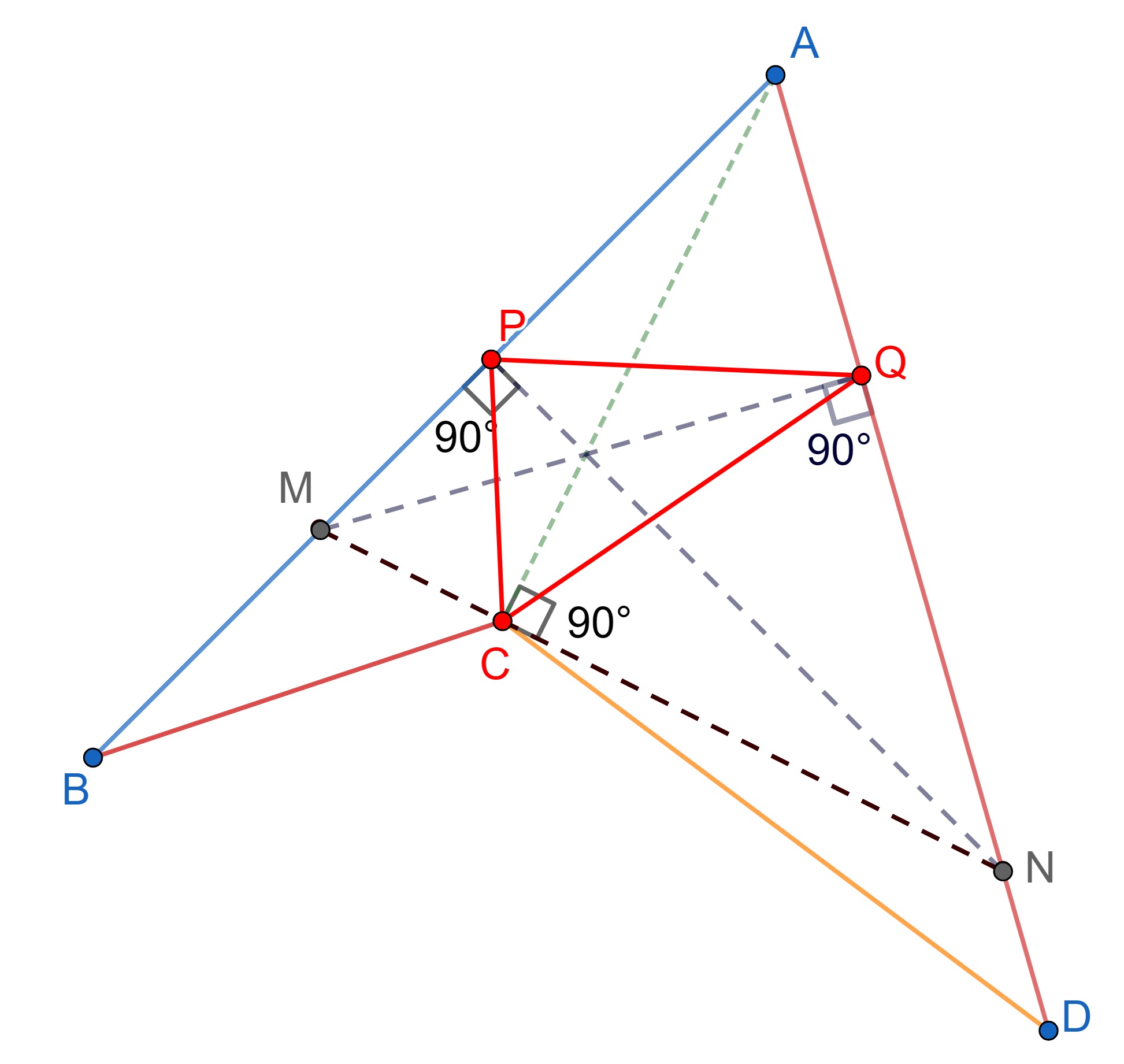}
		\caption{Illustration of solution of generalized Fagnano problem}
	\end{figure}
	To verify our assertion that $\triangle PQC$ has the minimal perimeter, we investigate all exhaustive cases for $\triangle XYZ$ formed in the nonconvex quadrangle $ABCD$, where $X$ is on $AB$, $Y$ is on $AD$ and $Z$ is either on $BC$ or $CD.$ We prove that the perimeter of \( \triangle XYZ \) is not less than that of \( \triangle PQC \).

\begin{description}
	\item[\textbf{Case 1}] Suppose $Z = C$ is fixed, with $X$ lying anywhere 
	on side $AB$ and $Y$ lying anywhere on side $AD$. 
	
	We employ the reflection principle to handle all positions of $X$ and 
	$Y$ simultaneously. Reflect point $C$ in line $AB$ to $C'$, and 
	reflect $C$ in line $AD$ to $C''$ (see Figure~\ref{fig:case2reflection}).
	\begin{figure}[H]
		\centering
		\includegraphics[height=6cm]{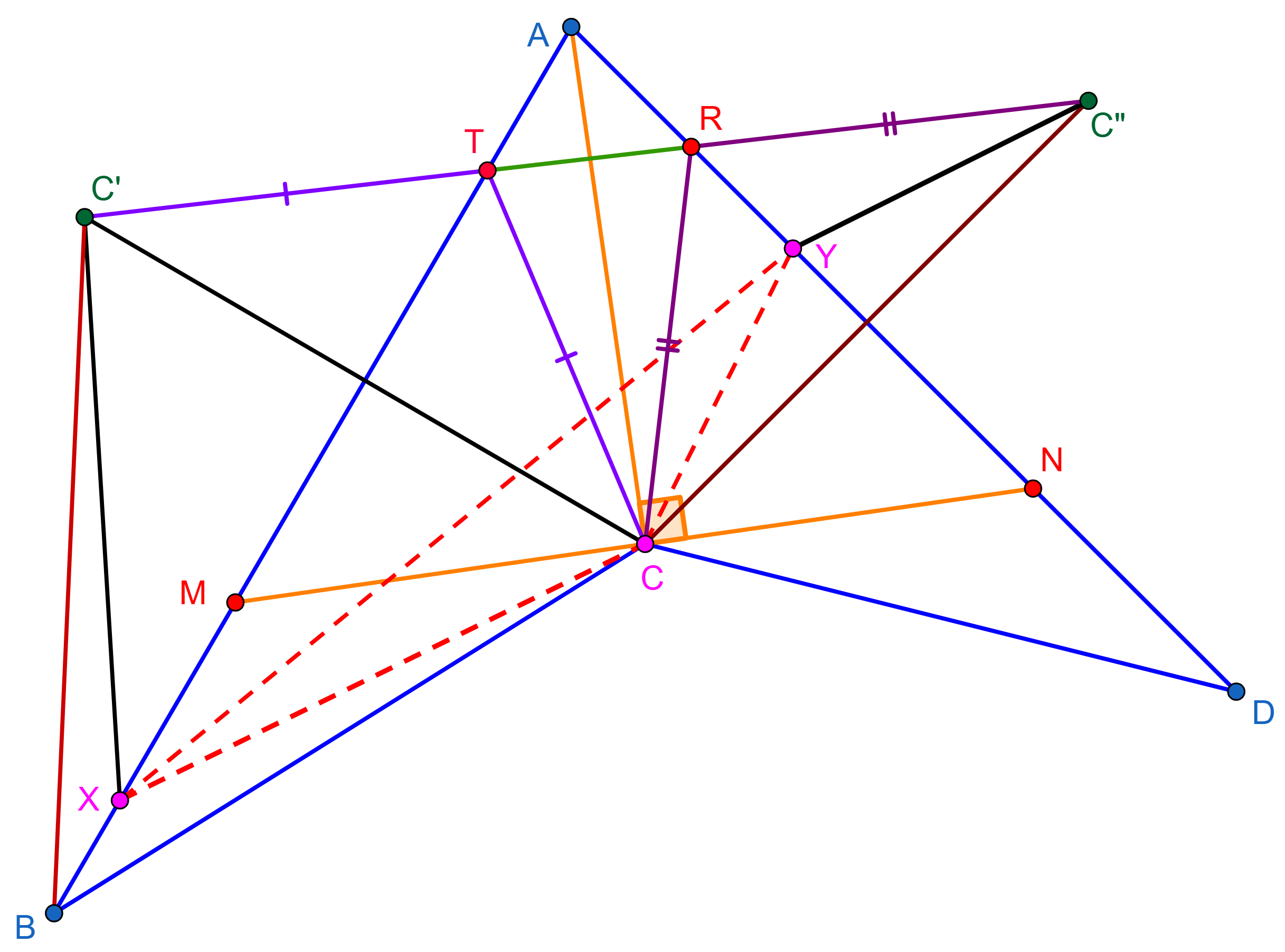}
		\caption{Illustration of Case~1 using the reflection principle.}
		\label{fig:case2reflection}
	\end{figure} 
	By the definition of reflection, for any point $X$ on $AB$ and any point 
	$Y$ on $AD$, we have
	$$|XC| = |XC'| \quad \text{and} \quad |YC| = |YC''|.$$
	
	Therefore, the perimeter of $\triangle XYC$ can be expressed as:
	\begin{equation}\label{eq:reflection_perimeter}
		\mathrm{per}(\triangle XYC) = |XC| + |XY| + |YC| 
		= |XC'| + |XY| + |YC''|.
	\end{equation}
	
	By the triangle inequality,
	\begin{equation}\label{eq:triangle_ineq}
		|XC'| + |XY| + |YC''| \geq |C'C''|,
	\end{equation}
	with equality if and only if $X$ and $Y$ lie on the line segment $C'C''$.
	
	It remains to show that $|C'C''| = p_F$. Since $\triangle AMN$ is acute-angled and $C$ lies on $MN$. The segment $C'C''$ intersects 	sides $AM$ and $AN$ at points $T$ and $R$ respectively. By the reflection 
	principle, we have
	\begin{align*}
		|C'C''| &= |C'T| + |TR| + |RC''|\\
		&= |CT| + |TR| + |RC|\\
		&= \mathrm{per}(\triangle TRC).
	\end{align*}

	Since $C$ is the foot of perpendicular from $A$ to $MN$, therefore $T$ and $R$ are precisely the feet of the perpendiculars from $N$ to side $AM$ and from $M$ to side $AN$ respectively, see,  e.g. \cite[Problem 1.1.3]{andreescu_geometric_2006}, which means $\triangle TRC$ is the orthic triangle of $\triangle AMN$. By Theorem  \ref{classicalFagnanoTheorem}, it must coincide with the orthic $\triangle PQC$ of  $\triangle AMN$.
	
	Combining equations \eqref{eq:reflection_perimeter} and 
	\eqref{eq:triangle_ineq}, we conclude that
	$$\mathrm{per}(\triangle XYC) \geq |C'C''| = p_F,$$
	with equality if and only if $X = P (=T)$ and $Y = Q(=R)$.
	
\end{description}

	\begin{description}
		\item[\textbf{Case 2}] Suppose $Z$ is any point on $CB$ or $CD$ and $X$ and $Y$ are between $AM$ and $AN$ respectively.
		We further divide this case into two sub-cases.  
		\begin{description}
			\item[\textbf{Case 2.1}] Suppose $Z$ lies between $BC$.\\
			For $\triangle AMN$, $\triangle PQC$ is the solution of the Fagnano problem, which means $\triangle PQC$ has the minimal perimeter among all inscribed triangles in $\triangle AMN.$ Take a point $L$ on $MN$ and inside the $\triangle XYZ$ (see Figure \ref{case2}). Since $\triangle LXY$  inscribed in $\triangle AMN$, we have $\text{per} (\triangle LXY) > \text{per} (\triangle PQC)=p_F.$ Since $\triangle LXY$ is contained in $\triangle XYZ$, therefore, using Fact 2, we have
			\begin{equation}
				\text{per} (\triangle XYZ) > \text{per} (\triangle LXY) > p_F
			\end{equation}
		\end{description}

		\begin{description}
			\item[\textbf{Case 2.2}] Suppose $Z$ lies between $CD$ \\
			The proof is similar to the Case 2.1, and we have 
			\begin{equation}
				\text{per} (\triangle XYZ) > \text{per} (\triangle OXY) > p_F
			\end{equation}
			\begin{figure}[H]
				\centering
				\includegraphics[height=5cm]{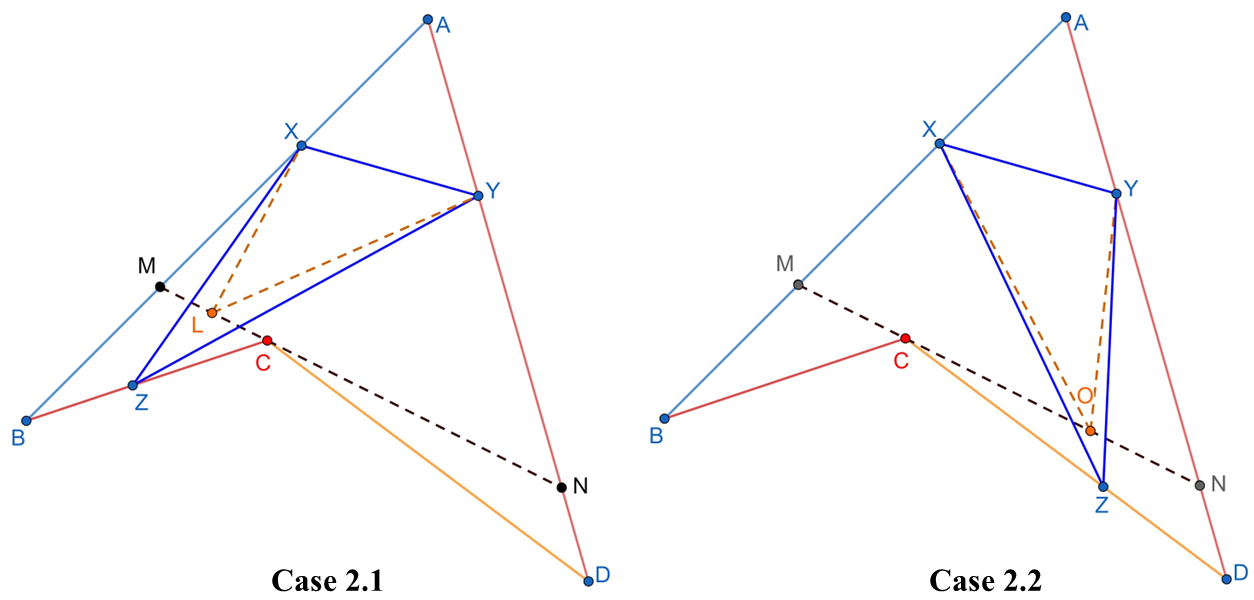}
				\caption{Illustration of Case 2}
				\label{case2}
			\end{figure}
		\end{description}
	\end{description}

	\begin{description}
		\item[\textbf{Case 3}] Suppose exactly one of the following holds: either $X$ lies between $AM$ or $Y$ lies between $AN$, and $Z$ lies on either $BC$ or $CD$. \\
		Let the segments $XY$ and $MN$ intersect at $T$, and given that $\triangle AMN$ is acute, it follows that any triangle  inscribed within it (different from $\triangle PQC$) will have a perimeter greater than that of the orthic triangle. Therefore, we have:
		$$p_F < \text{per}(\triangle YMT)$$
		Furthermore, since $\angle TMX (=\angle NMX) > 90^\circ$, we have $|TX| > |MT|$ and $|YX| > |YM|$  (see Figure \ref{case3}). Consequently, we have:
		\begin{align*}
			p_F & < \text{per}(\triangle YMT) \\
			&= |YM| +| MT| + |TY| \\
			& < |YX |+ |TX| + |TY| \\
			&= |YX| + |YX| \\
			& < |YX| +| XZ| + |YZ| \\
			&= \text{per}(\triangle XYZ) \\
			\text{i.e., } p_F & < \text{per}(\triangle XYZ)
		\end{align*}
		\begin{figure}[H]
			\centering
			\includegraphics[height=6cm]{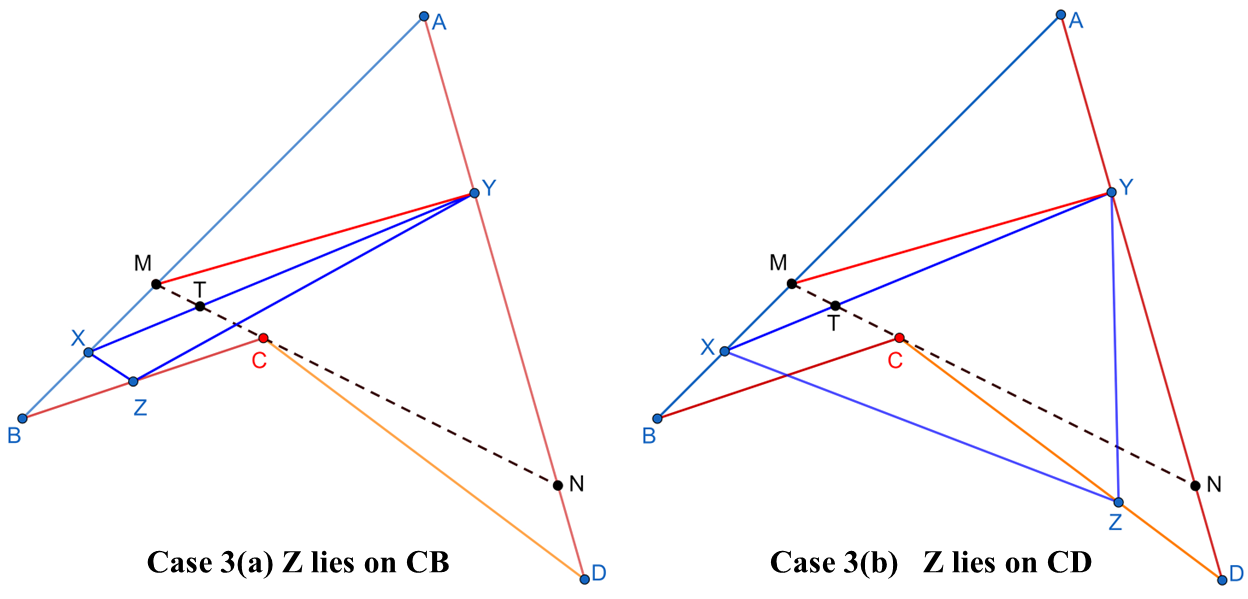}
			\caption{Illustration of Case 3}
			\label{case3}
		\end{figure}
	\end{description}
	
	\begin{description}
		\item[\textbf{Case 4}] Suppose both of the point  $X$ and $Y$ lie between $MB$ and $ND$ respectively, and $Z$ lies on either $BC$ or $CD$. \\
		Since the sum of two sides is always greater than the third side in $\triangle XYZ$, therefore $|XZ| + |ZY| > |XY|$  (see Figure \ref{case4}). Thus,
\begin{align*}
|XZ| + |ZY| + |XY| 
&> 2|XY| \\
&> 2|MN| \\
&> p_F 
\quad \text{[from (\ref{inequalities01}) and (\ref{perimeter<2MN})]} \\
\text{Hence,}\ \mathrm{per}(\triangle XYZ) &> p_F.
\end{align*}

		\begin{figure}[H]
			\centering
			\includegraphics[height=6cm]{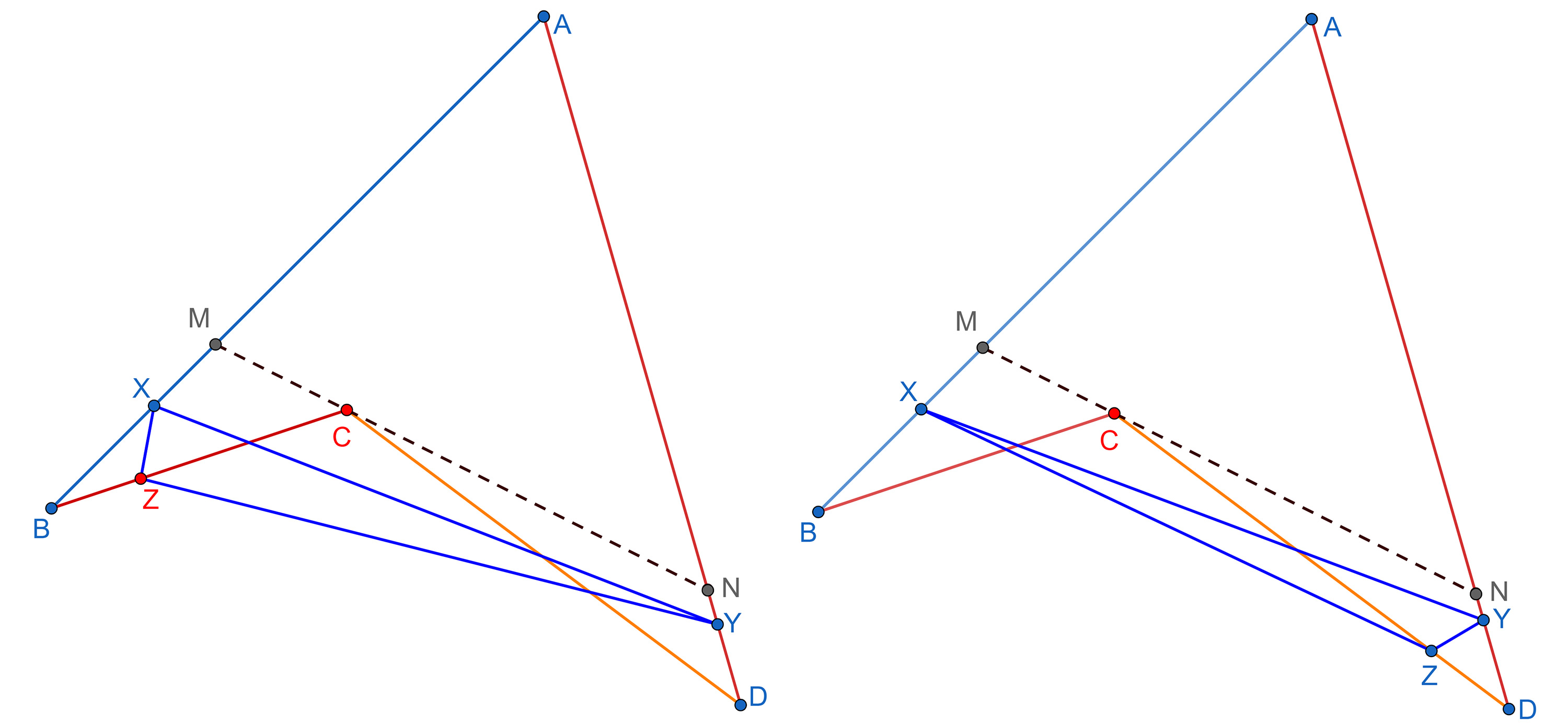}
			\caption{Illustration of Case 4}
			\label{case4}
		\end{figure}
	\end{description}

Hence, all possible configurations of the triangle $\triangle XYZ$ within the quadrangle $ABCD$ have been examined under the assumptions of Case~\textbf{(A)} in Theorem~\ref{maintheorem}. 
When the perpendicular to $AC$ through $C$ intersects both sides $AB$ and $AD$ at $M$ and $N$ respectively, then orthic triangle of the auxiliary triangle $\triangle AMN$ is the minimal perimeter triangle, to the generalized Fagnano problem.

\medskip

We now turn to Case~\textbf{(B)}, which arises when the angular conditions in equation~\eqref{existence} are not satisfied. 
Geometrically, this occurs when the perpendicular to $AC$ through $C$ fails to intersect exactly one of the sides $AB$ or $AD$( the nonconvexity ensures both the sides cannot be missed.)

\begin{description}
\item[\textbf{Case 5}] 
Without loss of generality, let us assume that the perpendicular 
 to $AC$ through $C$ fails to meet the side $AB$, but meets the side $AD$ at $N$. 
 Now extend the side $BC$ to meet $AD$ at $W$,  thereby forming the auxiliary triangle $\triangle ABW$. Note that the point $M$ is beyond $AB,$ therefore $N$ lies between $AW$. In $\triangle ACN,$  since  $\angle C = 90^{\circ}$,  $\angle  N < 90^{\circ}$. 
 Consequently, $\angle AWB < 90^\circ$. Therefore $\triangle ABW$ is acute-angled. We shall now demonstrate that the orthic triangle of $\triangle ABW$ yields the minimal-perimeter triangle for this case. We denote the perimeter by $p_W$. This situation gives rise to four subcases, discussed below.

\medskip
\noindent
\begin{description}
\item[\textbf{Case 5.1}] 
Suppose that $X$ lies on $AB$, $Y$ lies on $AW$, and $Z$ lies on $BC$. 
 
By the classical Fagnano problem, the orthic triangle of $\triangle ABW$ has the minimal perimeter$(p_W)$ among all triangles inscribed in $\triangle ABW$  (see Figure \ref{case5.1}). Hence, $p_W \leq \text{per} (\triangle XYZ)$ with equality if and only if $\triangle XYZ$ is the orthic triangle of $\triangle ABW$.
 
 \begin{figure}[H]
	\centering
	\includegraphics[height=6cm]{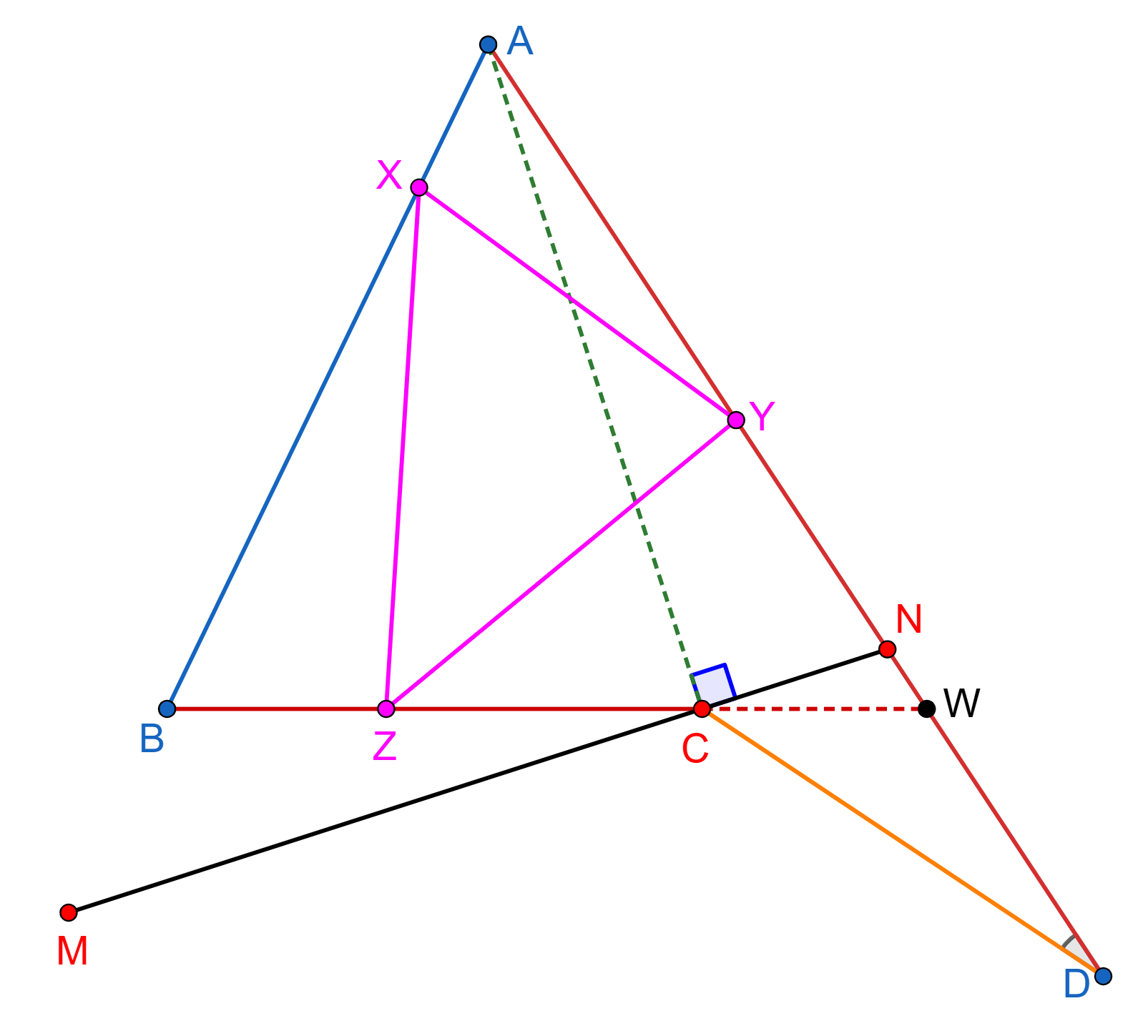}
	\caption{Illustration of Case 5.1}
	\label{case5.1}
\end{figure}

\item[\textbf{Case 5.2}] 

Suppose that $X$ lies on $AB$, $Y$ lies between $WD$, and $Z$ lies on $BC$.\\
Join $XW$. Since $\angle BWY > 90^\circ$, in $\triangle XWY$ we have  $|XY| > |XW|$, and in $\triangle ZWY$, we have  $|ZY| > |ZW|$  (see Figure \ref{case5.2}).
Therefore,
\begin{align}
	p_W< \text{per}(\triangle XZW) 
	&= |XZ| + |ZW| + |XW| \nonumber \\
	&< |XZ| + |ZY| + |XY| \nonumber \\
	&< \text{per}(\triangle XYZ). 
\end{align}

\begin{figure}[H]
	\centering
	\includegraphics[height=6cm]{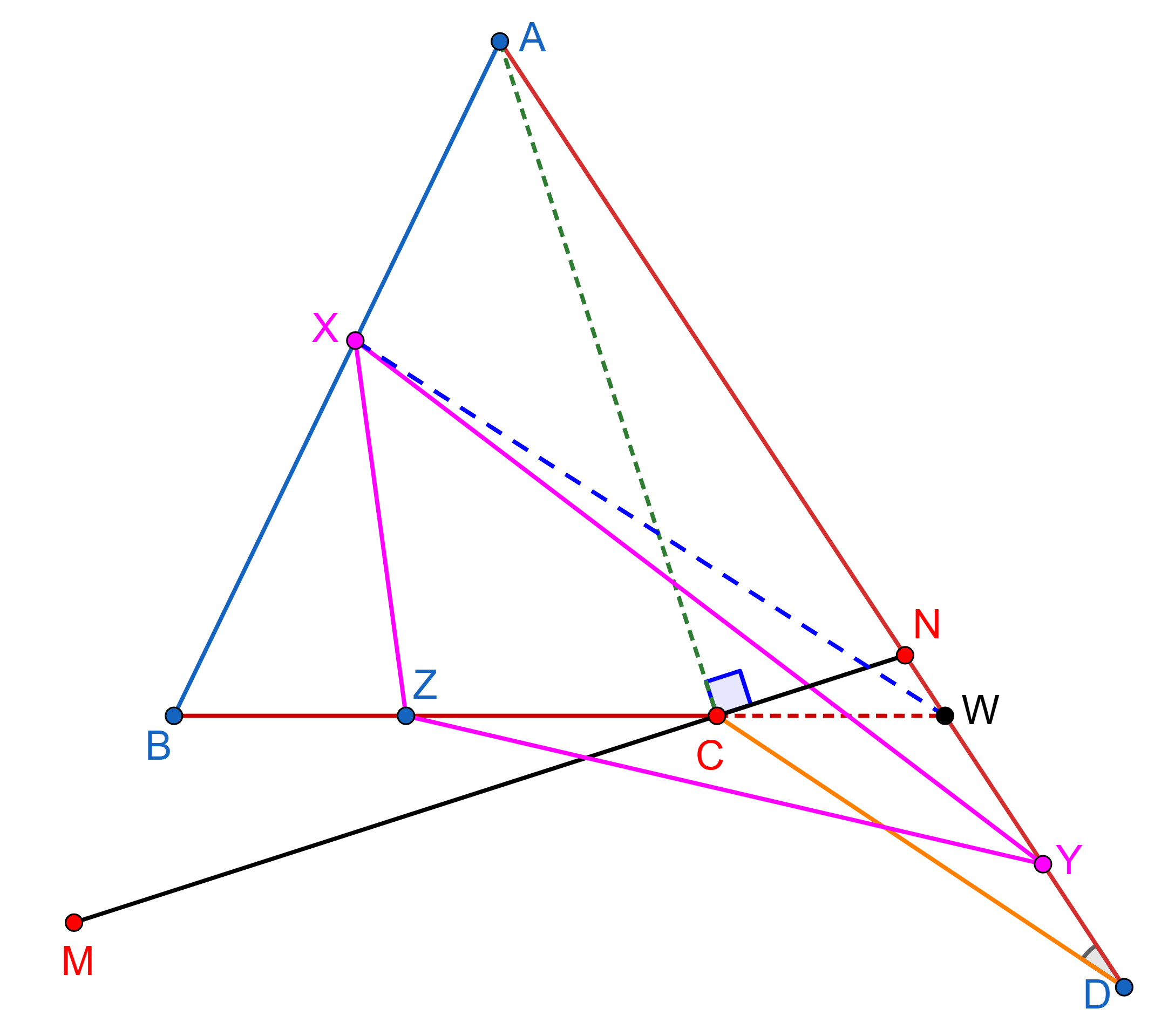}
	\caption{Illustration of Case 5.2}
	\label{case5.2}
\end{figure}

\item[\textbf{Case 5.3}] 
Suppose that $X$ lies on $AB$, $Y$ lies on $AW$, and $Z$ lies between $CD$. 
Let $XZ$ intersect $BW$ at $S$. Join $YS$. Since $\triangle XYS$ is inside the $\triangle XYZ$ with same base $XY$, using Fact 2, we have $p_{W} \leq \text{per}(\triangle XYS) <\text{per}( \triangle  XYZ)$  (see Figure \ref{case5.3}). Hence, result follows.

 \begin{figure}[H]
	\centering
	\includegraphics[height=6cm]{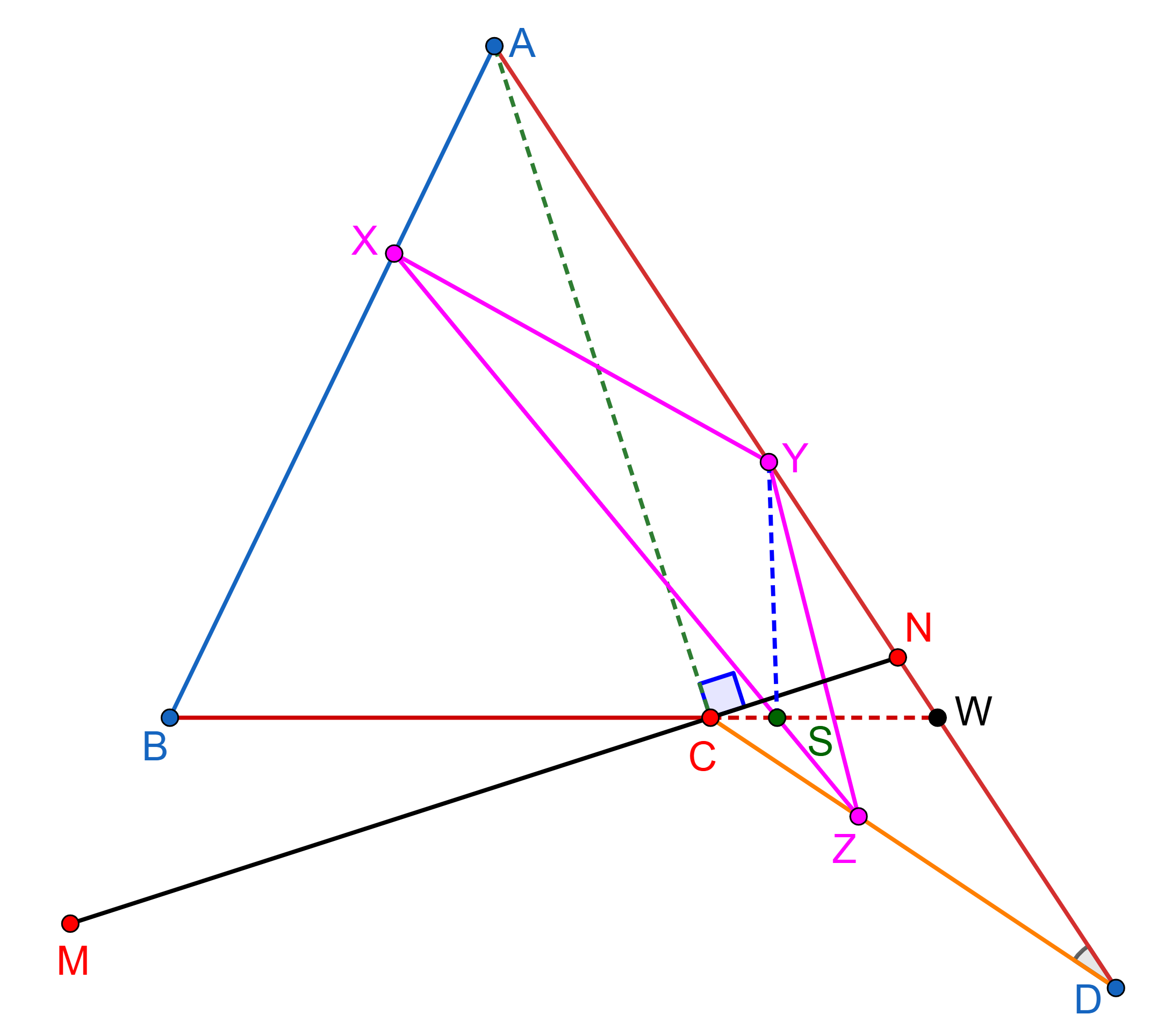}
	\caption{Illustration of Case 5.3}
	\label{case5.3}
\end{figure}

\item[\textbf{Case 5.4}]

Suppose that $X$ lies on $AB$, $Y$ lies between $WD$, and $Z$ lies on $CD$. \\

Let $XZ$ intersects $BW$ at $S$. Join $SY.$ In $\triangle SYW$  since $\angle SWY(=\angle BWY) > 90^\circ$, therefore $|SY| >|SW|$. 
In $ \triangle SZY, ~ |SZ| +|ZY| > |SY|$. Join $XW$. In $\triangle XYW$, $\angle XWY > 90^\circ$ whereby $|XY| > |WX|$  (see Figure \ref{case5.4}). It follows that

\begin{align}
	p_W < \text{per}(\triangle XSW)
	&< |XS| + |SW| + |WX| \notag\\
	& < |XS| + |SY|+ |WX| \notag\\
	&< |XS| + |SZ| + |ZY| + |WX| \notag\\
	&< |XZ| + |ZY| + |XY| = \text{per}(\triangle XYZ)\notag
\end{align}

\begin{figure}[H]
	\centering
	\includegraphics[height=6cm]{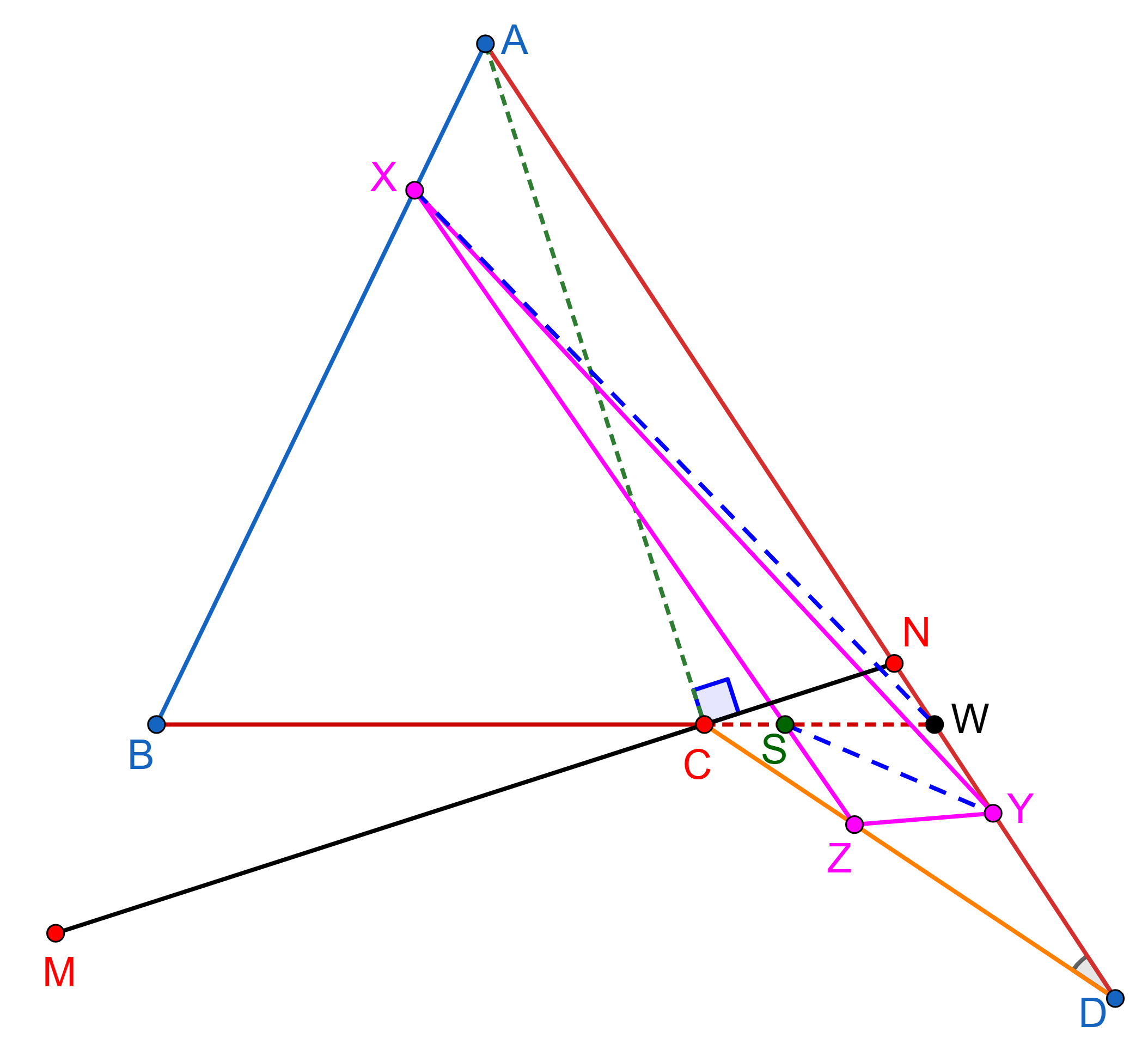}
	\caption{Illustration of Case 5.4}
	\label{case5.4}
\end{figure}

\medskip
Thus, in all subcases~5.1, 5.2, 5.3 and~5.4, the orthic triangle of the auxiliary triangle $\triangle ABW$ is the unique minimal-perimeter triangle for Case~\textbf{(B)} of Theorem~\ref{maintheorem}. 
\end{description}
\end{description}	
\end{proof}

The analyses of Cases~\textbf{(A)} and~\textbf{(B)} together establish that in every admissible configuration of an acute-angled nonconvex quadrangle, the triangle of minimal perimeter is realized as the orthic triangle of an appropriately constructed auxiliary triangle—either $\triangle AMN$ or $\triangle ABW$. 
This unified result extends the classical Fagnano problem to nonconvex quadrangle and reveals a consistent geometric principle underlying both cases.

\begin{remark}
	It follows from Lemma~\ref{lemma01} that the perimeter of the minimal triangle in the generalized Fagnano problem is bounded above by \(2|MN|\) (or $2|BW|$ in Case-B), where \(MN \perp AC\) in the nonconvex quadrangle \(ABCD\). 
	This result establishes a simple and practical upper bound: the minimal perimeter cannot exceed twice the length of \(MN\)  (or $2 |BW|$).
\end{remark}

\begin{acknowledgment}
	We sincerely thank the anonymous reviewer for the constructive comments 
	and valuable suggestions, which greatly improved the quality and clarity of 
	this manuscript.
	The second author gratefully acknowledges his supervisor, Dr.~R.~K.~Pandey, for providing the opportunity to pursue this research and for his continuous mentorship and support throughout.

\end{acknowledgment}

\section*{Disclosure Statement}
The author declares that there is no conflict of interest regarding the publication of this paper.

\bibliographystyle{plain}
\bibliography{References.bib}	

\end{document}